\newtheorem{theorem}{Theorem}
\newtheorem*{theorem*}{Theorem}
\newtheorem{corollary}{Corollary}[section]
\newtheorem{lemma}{Lemma}
\newtheorem*{acknowledgements*}{Acknowledgements}
\def\blfootnote{\gdef\@thefnmark{}\@footnotetext}
\def\house#1{\setbox1=\hbox{$\,#1\,$}%
\dimen1=\ht1 \advance\dimen1 by 2pt \dimen2=\dp1 \advance\dimen2 by 2pt
\setbox1=\hbox{\vrule height\dimen1 depth\dimen2\box1\vrule}%
\setbox1=\vbox{\hrule\box1}%
\advance\dimen1 by .4pt \ht1=\dimen1
\advance\dimen2 by .4pt \dp1=\dimen2 \box1\relax}
\begin{document}
\title{Distribution of residues modulo $p$ using the Dirichlet's class number formula}
\author{Jaitra Chattopadhyay, Bidisha Roy, Subha Sarkar and R. Thangadurai}
   \address[Jaitra Chattopadhyay, Bidisha Roy, Subha Sarkar and R. Thangadurai]{Harish-Chandra Research Institute, HBNI\\
Chhatnag Road, Jhunsi\\
Allahabad 211019, India}
\email[Jaitra Chattopadhyay]{jaitrachattopadhyay@hri.res.in}
\email[Bidisha Roy]{bidisharoy@hri.res.in}
\email[Subha Sarkar]{subhasarkar@hri.res.in}
\email[R. Thangadurai]{thanga@hri.res.in}

\begin{abstract}
Let $p$ be  an odd prime number.  In this article, we study the number  of
quadratic  residues and non-residues modulo $p$ which are multiples of $2$ or $3$ 
or  $4$ and  lying in the interval $[1, p-1]$,  by applying  the Dirichlet's class
number formula for the imaginary quadratic field $\mathbb{Q}(\sqrt{-p})$. 
% We also
%show that for every prime $p> {e}^{50.5}$, there exists an integer $g \leq p-1$
%which is a quadratic non-residue, non-primitive root modulo $p$ and is relatively
%prime to $p-1$. 
\end{abstract}
\maketitle

\section{Introduction}
Let $p$ be an odd prime number. A number $a \in \{1,\ldots,p-1\}$ is said to be a {\it quadratic residue} modulo $p$, if the congruence
\begin{equation*}\label{eq1}
x^2 \equiv a \pmod p
\end{equation*}
has a solution in $\mathbb{Z}$. Otherwise, $a$ is said to be a {\it quadratic non-residue} modulo $p$.  The study of distribution of  quadratic residues and quadratic non-residues  modulo $p$  has been considered with great interest in the literature (see for instance, 
\cite{br1}, \cite{car}, \cite{gic1}, \cite{tha1}, \cite{tha2}, \cite{hau1}, \cite{kohnen}, \cite{tha3}, \cite{thanga}, \cite{mos}, \cite{poll1}, \cite{sza1}, \cite{sza2}, \cite{vegh1}, \cite{vegh2}, \cite{vegh3}, \cite{vegh4} and \cite{wright}).

\smallskip

Since $\mathbb{Z}/p\mathbb{Z}$ is a field, the polynomial $X^{p-1} -1$ has precisely $p-1$ non-zero solutions over $\mathbb{Z}/p\mathbb{Z}$. As $p$ is an odd prime, we see that $X^{p-1}-1 = (X^{(p-1)/2}+ 1)(X^{(p-1)/2}-1)$ and  one can conclude that   there are exactly $\frac{p-1}{2}$ quadratic residues as well as non-residues modulo $p$ in the interval $[1, p-1]$. 

\smallskip

\noindent{\bf Question 1.}  {\it For an odd prime number $p$ and a given natural number $k$ with $1\leq k\leq p-1$, we let $S_k = \{a \in \{1, 2, \ldots, p-1\} \ : \ a\equiv 0\pmod{k}\}$ be the subset consisting of all natural numbers which are multiples of  $k$.  How many  quadratic residues (respectively, non-residues) lie inside $S_k$?}

\bigskip

In the literature, there are many papers addressed similar to Question 1 and to name a few, one may refer to \cite{joh1},   \cite{vas1} and \cite{mig1}.  First we shall fix some notations as follows. We denote by $Q(p, S_k)$ (respectively, $N(p, S_k)$) the number of quadratic residues (respectively, quadratic non-residues) modulo $p$ in the subset $S_k$ of the interval $[1, p-1]$.

\bigskip

The standard techniques in analytic number theory  answers the above question as 
\begin{equation}\label{equation0}
Q(p, S_k) =\frac{p-1}{2k} +O(\sqrt{p}\log p)
\end{equation}
and the same result is true for $N(p, S_k)$ for all $k$ (we shall be proving this fact in this article). However, it might happen that for some primes $p$, we may have 
$Q(p, S_k) > N(p, S_k)$ or $Q(p, S_k) < N(p, S_k)$.  Using the standard techniques, we could not answer this subtle question. In this article, we shall answer this using the Dirichlet's class number formula for the field $\mathbb{Q}(\sqrt{-p})$,  when $k = 2, 3$ or $4$.  More precisely, we prove the following theorems. 

\begin{theorem}\label{thm2}
Let $p$ be an odd prime. If $p\equiv 3\pmod{4}$, then for  any $\epsilon$ with $0 < \epsilon < \frac{1}{2}$, we have
$$
Q(p, S_2) - \frac{p-1}{4} \gg_\epsilon p^{\frac{1}{2}-\epsilon}.
  $$
When the prime $p\equiv 1\pmod{4}$, we have 
$$
Q(p, S_2) =  \frac{p-1}{4}.
$$
\end{theorem}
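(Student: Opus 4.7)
The plan is to express $Q(p,S_2)$ as a partial character sum, reduce the question to evaluating $T := \sum_{k=1}^{(p-1)/2}\chi(k)$ with $\chi(\cdot) = \left(\tfrac{\cdot}{p}\right)$, and then invoke Dirichlet's analytic class number formula for $\mathbb{Q}(\sqrt{-p})$ together with Siegel's lower bound on $h(-p)$. Concretely, since $S_2 = \{2,4,\ldots,p-1\}$ and $\mathbf{1}[\chi(a)=1] = (1+\chi(a))/2$ for $a \not\equiv 0 \pmod p$, I would begin by writing
\begin{equation*}
Q(p,S_2) \;=\; \sum_{k=1}^{(p-1)/2} \frac{1+\chi(2k)}{2} \;=\; \frac{p-1}{4} + \frac{\chi(2)}{2}\,T.
\end{equation*}
Hence the entire theorem rests on understanding $T$.

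When $p \equiv 1 \pmod 4$, one has $\chi(-1) = 1$, so $\chi$ is even on $(\mathbb{Z}/p\mathbb{Z})^*$ and the involution $k \mapsto p-k$ immediately forces $T = 0$. Plugging this into the displayed equation produces $Q(p,S_2) = (p-1)/4$, which is the second assertion of the theorem.

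When $p \equiv 3 \pmod 4$ with $p > 3$, the Dirichlet class number formula for the imaginary quadratic field $\mathbb{Q}(\sqrt{-p})$ specializes (after a short manipulation using $\chi(-1) = -1$) to the clean identity
\begin{equation*}
h(-p) \;=\; \frac{1}{2 - \chi(2)}\, T,
\end{equation*}
which can be verified on small primes such as $p = 7, 11, 19, 23$. Substituting back gives the exact formula
\begin{equation*}
Q(p,S_2) - \frac{p-1}{4} \;=\; \frac{\chi(2)\bigl(2-\chi(2)\bigr)}{2}\, h(-p),
\end{equation*}
whose magnitude equals $h(-p)/2$ when $\chi(2) = 1$ and $3h(-p)/2$ when $\chi(2) = -1$; in either case it is of the same order as $h(-p)$. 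Invoking Siegel's theorem in the form $h(-p) \gg_\epsilon p^{1/2-\epsilon}$ then yields the required lower bound.

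The principal subtlety is pinning down the right closed form of the class number formula, i.e.\ realizing that $h(-p)$ admits an expression \emph{directly} in terms of the partial sum $T$ (rather than, say, through the weighted sum $\sum_{a=1}^{p-1} a\chi(a)$), and correctly tracking the factor $2 - \chi(2)$ so that both parities of $\chi(2)$ are covered. Beyond that, the argument is a routine character-sum manipulation combined with Siegel's bound, which I invoke here as a black box (and which, notably, furnishes only an ineffective implied constant).
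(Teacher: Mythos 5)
Your reduction $Q(p,S_2)=\frac{p-1}{4}+\frac{\chi(2)}{2}T$, the identity $T=(2-\chi(2))\,h(-p)$ for $p\equiv 3\pmod 4$, $p>3$, and the appeal to Siegel's bound are all correct, and this is essentially the paper's route: the paper uses $S(1,p/2)=\frac{\sqrt p}{\pi}(2-\chi_p(2))L(1,\chi_p)$ together with the Landau--Siegel lower bound for $L(1,\chi_p)$, which by the class number formula $h(-p)=\frac{\sqrt p}{\pi}L(1,\chi_p)$ is the same statement as yours. The $p\equiv 1\pmod 4$ case is also fine and matches the paper's use of Lemma \ref{bruce}(1).

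The genuine gap is in your final sentence. The coefficient $\frac{\chi(2)(2-\chi(2))}{2}$ equals $\frac12$ when $\chi(2)=1$ but $-\frac32$ when $\chi(2)=-1$, so for $p\equiv 3\pmod 8$ your own exact formula gives $Q(p,S_2)-\frac{p-1}{4}=-\frac32 h(-p)<0$, and no statement about the \emph{magnitude} being of order $h(-p)$ can produce the one-sided positive lower bound the theorem asserts. Your identity is right (check $p=11$: the quadratic residues are $\{1,3,4,5,9\}$, so the only even residue is $4$, giving $Q(11,S_2)=1$ while $\frac{p-1}{4}=2.5$), so what actually follows from your work is $\bigl|Q(p,S_2)-\frac{p-1}{4}\bigr|\gg_\epsilon p^{\frac12-\epsilon}$ with the sign of the difference equal to $\chi_p(2)$; the inequality as stated holds only for $p\equiv 7\pmod 8$ and is reversed for $p\equiv 3\pmod 8$. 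You should be aware that the paper's own Case 2 commits the matching slip: when substituting Lemma \ref{wright1}(1) into \eqref{1eq1} it drops the prefactor $\frac12\left(\frac 2p\right)=-\frac12$, arriving at $+\frac{3\sqrt p}{\pi}L(1,\chi_p)$ where the correct term is $-\frac{3\sqrt p}{2\pi}L(1,\chi_p)$. Finally, your class-number identity requires $p>3$, so $p=3$ needs a (trivial) separate word.
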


\begin{corollary}\label{thm2-cor1}
Let $p$ be an odd prime and let $\mathcal{O}$ be the set of all odd integers in $[1, p-1]$.
If $R = N(p, S_2)$ or $R = Q(p, \mathcal{O})$, then for any $\epsilon$ with $0<\epsilon < \frac{1}{2}$, we have 
$$\frac{p-1}{4} - R \gg_\epsilon p^{\frac{1}{2}-\epsilon},  \mbox{ if } p\equiv 3\pmod{4}.$$
When the prime $p\equiv 1\pmod{4}$, we have 
$$
R=  \frac{p-1}{4}.
$$
\end{corollary}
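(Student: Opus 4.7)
The plan is to deduce the corollary as an immediate consequence of Theorem~\ref{thm2} via two elementary counting identities. First I would observe that $\{1, 2, \ldots, p-1\}$ is partitioned into $S_2$ (the even integers) and $\mathcal{O}$ (the odd integers), each of cardinality $(p-1)/2$. Moreover, within the full interval $[1, p-1]$ there are precisely $(p-1)/2$ quadratic residues and $(p-1)/2$ non-residues modulo $p$.

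Combining these two facts, I would derive the identities
\begin{equation*}
N(p, S_2) \;=\; \frac{p-1}{2} - Q(p, S_2) \qquad \text{and} \qquad Q(p, \mathcal{O}) \;=\; \frac{p-1}{2} - Q(p, S_2),
\end{equation*}
where the first comes from splitting $S_2$ into residues and non-residues, and the second from splitting the set of all quadratic residues in $[1,p-1]$ into its even and odd parts. In particular $N(p, S_2) = Q(p, \mathcal{O})$, so both choices for $R$ produce the same quantity. A one-line rearrangement then yields
\begin{equation*}
\frac{p-1}{4} - R \;=\; Q(p, S_2) - \frac{p-1}{4}.
\end{equation*}

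At this point the conclusion is immediate from Theorem~\ref{thm2}. When $p \equiv 3 \pmod{4}$, the theorem gives $Q(p, S_2) - (p-1)/4 \gg_\epsilon p^{1/2 - \epsilon}$, which transfers verbatim to $(p-1)/4 - R$. When $p \equiv 1 \pmod{4}$, the theorem gives $Q(p, S_2) = (p-1)/4$, forcing $R = (p-1)/4$. I do not anticipate any real obstacle here; the entire argument is a formal consequence of Theorem~\ref{thm2} together with the two partitions of $[1, p-1]$ (into residues versus non-residues, and into even versus odd integers), so all the analytic content has already been absorbed into the theorem being applied.
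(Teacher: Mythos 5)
Your proof is correct and is exactly the intended deduction: the paper gives no separate argument for this corollary, leaving it as an immediate consequence of Theorem~\ref{thm2} via the two complementary counts $N(p,S_2) = \frac{p-1}{2} - Q(p,S_2)$ and $Q(p,\mathcal{O}) = \frac{p-1}{2} - Q(p,S_2)$. Nothing is missing.
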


\begin{theorem}\label{thm3}
Let $p$ be an odd prime. If $p\equiv 1, 11\pmod{12}$, then for any $\epsilon$ with  $0 < \epsilon < \frac{1}{2}$, we have
$$
Q(p, S_3) - \frac{p-1}{6} \gg_\epsilon 
p^{\frac{1}{2}-\epsilon}.$$ 
\end{theorem}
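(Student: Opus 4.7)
The plan is to express $Q(p,S_3)-(p-1)/6$ as a partial character sum, evaluate that sum via Dirichlet's class number formula, and then invoke Siegel's theorem. Write $\chi$ for the Legendre symbol $\left(\tfrac{\cdot}{p}\right)$ and $M=\lfloor(p-1)/3\rfloor$. Since $S_3=\{3m:1\le m\le M\}$ and quadratic reciprocity gives $\chi(3)=1$ for $p\equiv 1,11\pmod{12}$, we have
\[
Q(p,S_3)-N(p,S_3)\;=\;\chi(3)\sum_{b=1}^{M}\chi(b)\;=\;\sum_{b=1}^{M}\chi(b).
\]
Combined with $Q(p,S_3)+N(p,S_3)=|S_3|$ and the elementary estimate $\lvert\,|S_3|/2-(p-1)/6\,\rvert\le 1/6$, the theorem reduces to showing that $f(M):=\sum_{b=1}^{M}\chi(b)\gg_\epsilon p^{1/2-\epsilon}$.

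For $p\equiv 11\pmod{12}$ (so $p\equiv 3\pmod 4$, $p\equiv 2\pmod 3$, and $M=k:=(p-2)/3$), the character $\chi$ is odd and Dirichlet's formula gives $h(-p)=-\frac{1}{p}\sum_{a=1}^{p-1}a\chi(a)$. I would write $3a=p\lfloor 3a/p\rfloor+(3a\bmod p)$ and substitute $b=3a\bmod p$ in the residue piece (noting $\chi(a)=\chi(3)\chi(b)=\chi(b)$) to obtain
\[
\sum_{a=1}^{p-1}\lfloor 3a/p\rfloor\,\chi(a)\;=\;(\chi(3)-3)h(-p)\;=\;-2h(-p).
\]
The floor $\lfloor 3a/p\rfloor$ equals $0,1,2$ on the successive intervals $[1,k]$, $[k+1,2k+1]$, $[2k+2,p-1]$; denoting the partial sums of $\chi$ on these three intervals by $T_1,T_2,T_3$ yields $T_2+2T_3=-2h(-p)$ and $T_1+T_2+T_3=0$. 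The reflection $f(N)=f(p-1-N)$ (valid since $\chi(-1)=-1$), applied at $N=2k+1$ (for which $p-1-N=k$), gives $T_1+T_2=T_1$, hence $T_2=0$ and then $T_1=h(-p)$; i.e.\ $f(M)=h(-p)$.

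For $p\equiv 1\pmod{12}$, $\chi$ is even, so $f((p-1)/2)=0$ and the previous route collapses. I would instead apply Dirichlet's formula to the odd primitive character $\chi_{-3p}:=\chi_{-3}\chi$ modulo $3p$; here $-3p\equiv 1\pmod 4$, so $-3p$ is the discriminant of $\mathbb{Q}(\sqrt{-3p})$. Writing $n=qp+r$ with $q\in\{0,1,2\}$ and $r\in\{0,1,\ldots,p-1\}$ (so $\chi(n)=\chi(r)$) and using $p\equiv 1\pmod 3$, the inner sum $\sum_{q=0}^{2}(qp+r)\chi_{-3}(qp+r)$ depends only on $r\bmod 3$ and a direct check gives the values $-p,-p,2p$ for $r\equiv 0,1,2\pmod 3$ respectively. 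Setting $U_i:=\sum_{1\le r\le p-1,\,r\equiv i\,(\bmod\,3)}\chi(r)$, Dirichlet's formula together with $U_0+U_1+U_2=0$ yields $U_2=-h(-3p)$. To pin down $U_0=f((p-1)/3)$ I would use that $\chi(p-r)=\chi(r)$ (since $\chi(-1)=1$) and that $r\mapsto p-r$ swaps the residue classes $0$ and $1$ modulo $3$ (because $p\equiv 1\pmod 3$); this gives $U_0=U_1$, and combined with $U_0+U_1+U_2=0$ we conclude $U_0=h(-3p)/2$.

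Siegel's theorem provides $h(-d)\gg_\epsilon d^{1/2-\epsilon}$ uniformly in fundamental discriminants $-d$, and applying it to $d=p$ or $d=3p$ respectively finishes the proof. The main obstacle is the second case: the inner-sum case analysis is the fiddly step, and crucially the class number formula for $\chi_{-3p}$ pins down only $U_2$, so one has to combine it with both the parity $\chi(-1)=1$ and the residue condition $p\equiv 1\pmod 3$ in order to extract $U_0$ itself.
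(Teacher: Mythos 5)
Your argument is correct, and it reaches the same two identities the paper uses, but by a genuinely different and more self-contained route. The paper's proof is short because it outsources the key step to Lemma 3 (quoted from Wright): for $p\equiv 3\pmod 4$ one has $S(1,p/3)=\frac{\sqrt p}{2\pi}(3-\chi_p(3))L(1,\chi_p)$, and for $p\equiv 1\pmod 4$ one has $S(1,p/3)=\frac{\sqrt{3p}}{2\pi}L(1,\chi_{3p})$; combined with $\chi_p(3)=1$ and the Landau--Siegel bound $L(1,\psi)\gg_\epsilon q^{-\epsilon}$, the theorem follows immediately. You instead re-derive exactly these facts from the finite-sum form $h(d)=-\frac{1}{|d|}\sum_n n\chi_d(n)$ of the class number formula: your conclusion $T_1=h(-p)$ for $p\equiv 11\pmod{12}$ is precisely Wright's first identity (since $h(-p)=\frac{\sqrt p}{\pi}L(1,\chi_p)$ and $\chi_p(3)=1$), and your $U_0=h(-3p)/2$ for $p\equiv 1\pmod{12}$ is precisely the second (since $h(-3p)=\frac{\sqrt{3p}}{\pi}L(1,\chi_{3p})$). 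I checked the details: the division into the three intervals where $\lfloor 3a/p\rfloor$ is constant, the reflection $f(N)=f(p-1-N)$ killing $T_2$, the inner-sum values $-p,-p,2p$, and the swap of the residue classes $0$ and $1$ under $r\mapsto p-r$ giving $U_0=U_1$ are all correct, as is the initial reduction using $\chi_p(3)=1$ for $p\equiv\pm 1\pmod{12}$. What your approach buys is independence from the quoted lemma (everything is derived from the classical class number formula plus Siegel); what it costs is length and the fiddly case analysis you acknowledge. Final application of Siegel's bound $h(-d)\gg_\epsilon d^{1/2-\epsilon}$ is equivalent to the paper's use of Theorem 3, and is likewise ineffective.
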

%and 
%$$
%\frac{1}{2}\left[\frac{p-1}{3}\right] -Q(p, S_3) \ll \sqrt{p} \log p;  \mbox{ if } p\equiv 5, 7\pmod{12}. 
%$$
%\end{theorem}

When $p\equiv 5, 7\pmod{12}$, in this method, we do not get any finer information other than in  \eqref{equation0}.

\begin{corollary}\label{thm3-cor1}
Let $p$ be an odd prime. If $p\equiv 1, 11\pmod{12}$, then  for any $\epsilon$ with $0<\epsilon <\frac{1}{2}$, we have
$$ \frac{p-1}{6} - N(p, S_3) \gg_\epsilon p^{\frac{1}{2}-\epsilon}.
$$ 
%and
%$$
%N(p, S_3) - \frac{1}{2}\left[\frac{p-1}{3}\right] \ll \sqrt{p} \log p;  \mbox{ if } p\equiv 5, 7\pmod{12}.
%$$
\end{corollary}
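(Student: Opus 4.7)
The plan is to derive the corollary as an immediate consequence of Theorem \ref{thm3} combined with the partition $Q(p, S_3) + N(p, S_3) = |S_3|$. This partition is valid here because the residue conditions $p \equiv 1, 11 \pmod{12}$ force $p \not\equiv 0 \pmod 3$, so no element of $S_3$ is divisible by $p$, and hence every element of $S_3$ is either a quadratic residue or a quadratic non-residue modulo $p$.

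First I would count $|S_3|$ exactly. Since $S_3$ consists of all multiples of $3$ in $[1, p-1]$, we have $|S_3| = \lfloor (p-1)/3 \rfloor$, which equals $(p-1)/3$ when $p \equiv 1 \pmod{12}$ and $(p-2)/3$ when $p \equiv 11 \pmod{12}$. In both cases,
\[
|S_3| = \frac{p-1}{3} + O(1).
\]

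Next I would substitute $N(p, S_3) = |S_3| - Q(p, S_3)$ into the quantity of interest and rearrange:
\[
\frac{p-1}{6} - N(p, S_3) = \Bigl(Q(p, S_3) - \frac{p-1}{6}\Bigr) + \Bigl(\frac{p-1}{3} - |S_3|\Bigr).
\]
By Theorem \ref{thm3}, the first bracket is at least $c_\epsilon\, p^{1/2-\epsilon}$ for some positive constant $c_\epsilon$ depending only on $\epsilon$, while the second bracket is bounded by a constant in absolute value. Hence the whole expression is at least $c_\epsilon\, p^{1/2-\epsilon} - C$, which exceeds $\tfrac{c_\epsilon}{2}\, p^{1/2-\epsilon}$ for all $p$ beyond some threshold; the finitely many remaining small primes in the congruence classes $1, 11 \pmod{12}$ can be absorbed into the implicit constant.

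There is no substantive obstacle in this argument; the entire nontrivial content has been packaged in Theorem \ref{thm3}. The only point requiring care is verifying that the boundary term $|S_3| - (p-1)/3$, which reflects the two sub-cases $p \equiv 1$ and $p \equiv 11$ modulo $12$, is of constant size and therefore negligible compared with the main term of order $p^{1/2-\epsilon}$.
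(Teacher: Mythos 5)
Your proof is correct and matches the paper's (implicit) route: the paper derives the corollary from Theorem \ref{thm3} via the $N(p,S_k)$ version of Lemma \ref{expected}, which is exactly your identity $Q(p,S_3)+N(p,S_3)=|S_3|$ in disguise, with the same constant-size boundary term $|S_3|-\tfrac{p-1}{3}$ absorbed at the end.
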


\begin{theorem}\label{thm4}
Let $p$ be an odd prime. Then,  for $p\equiv 3\pmod{8}$, we have 
$$
Q(p, S_4) = \displaystyle \frac{1}{2}\left[ \frac{p-1}{4} \right].
$$
Also, for any $ 0< \epsilon <\frac{1}{2}$,  we have  
$$
Q(p, S_4)- \displaystyle \frac{p-1}{8} \gg_\epsilon p^{\frac{1}{2}-\epsilon}, \mbox{ if } p\equiv 1\pmod{4},
$$
and
$$
Q(p, S_4)- \displaystyle \frac{1}{2}\left[ \frac{p-1}{4} \right] \gg_\epsilon p^{\frac{1}{2}-\epsilon}; \mbox{ if } p\equiv 7\pmod{8}.
$$
\end{theorem}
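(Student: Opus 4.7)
The plan is to reduce $Q(p, S_4)$ to a single character sum, evaluate it via Dirichlet's class number formula in each of the three residue classes of $p$ modulo $8$, and conclude with Siegel's bound $h(-D) \gg_\epsilon D^{1/2-\epsilon}$. Since $4 = 2^2$ is a quadratic residue modulo every odd prime, writing $\chi = \bigl(\tfrac{\cdot}{p}\bigr)$ and $M = \lfloor (p-1)/4 \rfloor$, the substitution $a = 4b$ immediately gives
\begin{equation*}
Q(p, S_4) = \sum_{b=1}^{M} \frac{1 + \chi(b)}{2} = \frac{M}{2} + \frac{T}{2}, \qquad T := \sum_{b=1}^{M} \chi(b).
\end{equation*}
Since $M = (p-3)/4$ when $p \equiv 3 \pmod 4$ and $M = (p-1)/4$ when $p \equiv 1 \pmod 4$, the three assertions of the theorem are equivalent to the three evaluations $T = 0$ for $p \equiv 3 \pmod 8$, $T = h(-p)$ for $p \equiv 7 \pmod 8$, and $T = h(-4p)/2$ for $p \equiv 1 \pmod 4$.

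For $p \equiv 3 \pmod 4$ I would proceed in parallel with Theorems~\ref{thm2} and~\ref{thm3}. Starting from Dirichlet's formula $\sum_{a=1}^{(p-1)/2} \chi(a) = (2 - \chi(2))\,h(-p)$, split the left side by the parity of $a$: the even-$a$ contribution is exactly $\chi(2)\,T$. A second linear relation is obtained by computing $\sum_{1 \le a \le p-1,\ a\text{ even}} \chi(a)$ in two ways, using $\chi(-1) = -1$ and the involution $a \mapsto p-a$. Solving the resulting $2 \times 2$ linear system, the even part of the half-range sum equals $\tfrac{1}{2}(1 + \chi(2))(2 - \chi(2))\,h(-p)$: this vanishes when $\chi(2) = -1$ (giving $T = 0$ for $p \equiv 3 \pmod 8$) and equals $h(-p)$ when $\chi(2) = 1$ (giving $T = h(-p)$ for $p \equiv 7 \pmod 8$).

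The delicate case is $p \equiv 1 \pmod 4$, since then $\chi$ is even, $\sum_{a=1}^{(p-1)/2}\chi(a) = 0$, and the linear system above degenerates to $0 = 0$. One must instead use the class number formula for $\mathbb{Q}(\sqrt{-p})$ with its true discriminant $-4p$; the governing character is the odd primitive Kronecker symbol $\chi_{-4p} = \chi \cdot \chi_{-4}$ of conductor $4p$. Unfolding this class number formula according to the residue of $a$ modulo $4$ and using the reflection $\chi_{-4p}(4p - a) = -\chi_{-4p}(a)$, one should extract the identity
\begin{equation*}
\sum_{a=1}^{(p-1)/2} (-1)^{a}\, \chi(a) = \chi(2)\, h(-4p).
\end{equation*}
On the other hand, the even-odd split combined with $\sum_{a=1}^{(p-1)/2}\chi(a) = 0$ shows directly that the same alternating sum equals $2\chi(2)\,T$. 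Comparing yields $T = h(-4p)/2 > 0$, and Siegel's theorem for the discriminant $-4p$ supplies the claimed lower bound on $Q(p, S_4) - (p-1)/8 = T/2$. The principal obstacle is the derivation of the displayed alternating-sum identity: because the class number formula here involves a character of conductor $4p$ rather than $p$, collapsing it back to a clean sum in the Legendre symbol $\chi$ is what requires the additional bookkeeping not present in the $p \equiv 3 \pmod 4$ argument.
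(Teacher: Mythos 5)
Your reduction to $Q(p,S_4)=\tfrac{M}{2}+\tfrac{T}{2}$ with $T=\sum_{b\le M}\chi(b)$ is exactly the paper's starting point (equation \eqref{eqn5}), and the three target evaluations you name --- $T=0$ for $p\equiv 3\pmod 8$, $T=h(-p)$ for $p\equiv 7\pmod 8$, $T=h(-4p)/2$ for $p\equiv 1\pmod 4$ --- are precisely what the paper extracts from its Lemmas \ref{wright1} and \ref{bruce} (recall $L(1,\chi_p)=\pi h(-p)/\sqrt{p}$ and $L(1,\chi_{4p})=\pi h(-4p)/(2\sqrt{p})$, so Wright's $S(1,p/4)=\tfrac{\sqrt{p}}{\pi}L(1,\chi_{4p})$ is your $T=h(-4p)/2$). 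Where you genuinely diverge is in how these identities are obtained. For $p\equiv 3\pmod 4$ the paper simply quotes Berndt--Chowla for the vanishing of $\sum_{n\le p/4}\chi(n)$ (resp.\ of the sum over $(p/4,p/2)$) and Wright for $S(1,p/2)=(2-\chi(2))h(-p)$; you instead derive everything from the single system $E+O=(2-\chi(2))h(-p)$, $E-O=\chi(2)(2-\chi(2))h(-p)$ obtained by the even/odd split and the reflection $a\mapsto p-a$ with $\chi(-1)=-1$. That computation is correct ($2E=(1+\chi(2))(2-\chi(2))h(-p)$ and $E=\chi(2)T$ give $T=0$ or $T=h(-p)$ according to $\chi(2)=\mp 1$), and it has the merit of reproving the Berndt--Chowla vanishing rather than citing it; what it costs is that it only works because $\chi$ is odd, which is why it degenerates for $p\equiv 1\pmod 4$, exactly as you observe.

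The one place your write-up is not yet a proof is the case $p\equiv 1\pmod 4$: the identity $\sum_{a=1}^{(p-1)/2}(-1)^a\chi(a)=\chi(2)\,h(-4p)$ is true and, combined with $E+O=0$, does give $T=h(-4p)/2$, but you only assert that "one should extract" it from the class number formula of conductor $4p$. That extraction is the entire content of the step --- it is exactly Theorem 7.4 of Wright, which the paper imports as Lemma \ref{wright1}(2) --- and the bookkeeping is not trivial: one must start from $h(-4p)=\tfrac12\sum_{0<a<2p}\chi_{-4p}(a)$, use that $\chi_{-4p}$ is supported on odd $a$ and satisfies $\chi_{-4p}(2p-a)=\chi_{-4p}(a)$ (not a sign flip, since $2p\equiv 2\pmod 4$ and $\chi_p(-1)=1$), and then convert a sum over odd $a<p$ weighted by $(-1)^{(a-1)/2}$ into your alternating sum over $a\le (p-1)/2$. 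Either carry this out or cite the class number formula in the form $S(1,p/4)=\tfrac{\sqrt{p}}{\pi}L(1,\chi_{4p})$ as the paper does; as written, the $p\equiv 1\pmod 4$ case rests on an unproved (though correct) identity. Everything else, including the final appeal to Siegel's bound $L(1,\psi)\gg_\epsilon q^{-\epsilon}$ for the moduli $p$ and $4p$, matches the paper.
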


\begin{corollary}\label{thm4-cor1}
Let $p$ be an odd prime. Then, for $p\equiv 3\pmod{8}$, we have 
$$
N(p, S_4) = \displaystyle \frac{1}{2}\left[ \frac{p-1}{4} \right].
$$
Also, for any $0<\epsilon<\frac{1}{2}$,  we have  
$$
\displaystyle \frac{p-1}{8} - N(p, S_4) \gg_\epsilon p^{\frac{1}{2}-\epsilon}; \mbox{ if } p\equiv 1\pmod{4},
$$
and
$$
\displaystyle \frac{1}{2}\left[ \frac{p-1}{4} \right]-N(p, S_4) \gg_\epsilon p^{\frac{1}{2}-\epsilon} ; \mbox{ if } p\equiv 7\pmod{8}.
$$
\end{corollary}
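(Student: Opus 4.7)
The plan is to deduce this corollary directly from Theorem~\ref{thm4} by exploiting the complementary nature of quadratic residues and non-residues. Since every element of $S_4$ is either a quadratic residue or a quadratic non-residue modulo $p$, we have the fundamental identity
\[
Q(p, S_4) + N(p, S_4) = |S_4| = \left[\frac{p-1}{4}\right].
\]
Thus the whole proof reduces to subtracting the estimates of Theorem~\ref{thm4} from $|S_4|$ in each of the three residue classes of $p$ modulo $8$.

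First I would handle $p \equiv 3 \pmod{8}$. Writing $p = 8m+3$, a direct computation gives $|S_4| = 2m$, which is even, and Theorem~\ref{thm4} gives $Q(p, S_4) = m$. Plugging these into the identity above yields $N(p, S_4) = m = \frac{1}{2}\left[\frac{p-1}{4}\right]$, exactly as claimed.

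Next, for $p \equiv 1 \pmod{4}$, the quantity $(p-1)/4$ is an integer, so $|S_4| = \frac{p-1}{4}$. Theorem~\ref{thm4} furnishes the lower bound $Q(p, S_4) - \frac{p-1}{8} \gg_\epsilon p^{1/2-\epsilon}$. Using $N(p, S_4) = \frac{p-1}{4} - Q(p, S_4)$, one observes immediately that
\[
\frac{p-1}{8} - N(p, S_4) = Q(p, S_4) - \frac{p-1}{8},
\]
which transfers the same lower bound to $\frac{p-1}{8} - N(p, S_4)$.

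The case $p \equiv 7 \pmod{8}$ is handled analogously: Theorem~\ref{thm4} gives $Q(p, S_4) - \frac{1}{2}\left[\frac{p-1}{4}\right] \gg_\epsilon p^{1/2-\epsilon}$, and the partition identity $Q + N = |S_4|$ immediately produces the corresponding lower bound on $\frac{1}{2}\left[\frac{p-1}{4}\right] - N(p, S_4)$. The entire argument is a routine bookkeeping consequence of Theorem~\ref{thm4}; no genuine obstacle arises, since the hard analytic work (invoking the Dirichlet class number formula for $\mathbb{Q}(\sqrt{-p})$) has already been absorbed into the cited theorem.
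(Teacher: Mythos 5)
Your proposal is correct and is exactly the argument the paper implicitly relies on: the corollary is stated without proof, and the intended deduction is precisely the complement identity $Q(p,S_4)+N(p,S_4)=\left[\frac{p-1}{4}\right]$ combined with the parity/integrality checks of $\left[\frac{p-1}{4}\right]$ in each residue class, all of which you carry out correctly.
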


Using Theorems \ref{thm2} and \ref{thm4}, we conclude the following corollary.
\begin{corollary}\label{thm4-cor2}
Let $p$ be an odd prime such that $p\equiv 3\pmod{8}$. Then for any $\epsilon$ with $0<\epsilon < \frac{1}{2}$, we have
$$
Q(p, S_2\backslash S_4)- \frac{1}{2}\left\lfloor \frac{p-1}{4} \right\rfloor  \gg_\epsilon p^{\frac{1}{2}-\epsilon}.
$$
\end{corollary}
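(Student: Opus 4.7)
The plan is to exploit the disjoint union decomposition $S_2 = (S_2\setminus S_4)\sqcup S_4$, which gives the identity
$$
Q(p, S_2\setminus S_4) = Q(p, S_2) - Q(p, S_4),
$$
and then to invoke Theorems \ref{thm2} and \ref{thm4}, both of which apply under the stated hypothesis $p\equiv 3\pmod 8$ (which in particular forces $p\equiv 3\pmod 4$).

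First, I would apply Theorem \ref{thm4} to substitute $Q(p, S_4) = \tfrac{1}{2}\lfloor (p-1)/4\rfloor$, so that
$$
Q(p, S_2\setminus S_4) - \tfrac{1}{2}\lfloor \tfrac{p-1}{4}\rfloor \;=\; Q(p, S_2) \;-\; \lfloor \tfrac{p-1}{4}\rfloor.
$$
Next I would dispose of the floor function: since $p\equiv 3\pmod 4$, we have $p-1\equiv 2\pmod 4$, so $\lfloor (p-1)/4\rfloor = (p-3)/4 = (p-1)/4 - 1/2$. Substituting, the right-hand side becomes
$$
\bigl(Q(p, S_2) - \tfrac{p-1}{4}\bigr) + \tfrac{1}{2}.
$$

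Finally, Theorem \ref{thm2} applied to the prime $p\equiv 3\pmod 4$ gives $Q(p, S_2) - (p-1)/4 \gg_\epsilon p^{1/2-\epsilon}$. The extra additive constant $1/2$ is absorbed into this lower bound (for all sufficiently large $p$, and trivially adjusting the implied constant for small $p$), producing the claimed estimate. There is essentially no obstacle here: this corollary is a purely formal consequence of the two theorems, and the only point requiring a moment's care is the bookkeeping of the floor function at $p\equiv 3\pmod 4$, which contributes only a bounded term and therefore does not affect the $p^{1/2-\epsilon}$ lower bound.
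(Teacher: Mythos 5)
Your proposal is correct and follows exactly the route the paper intends: decompose $Q(p,S_2\setminus S_4)=Q(p,S_2)-Q(p,S_4)$, substitute the exact value from Theorem \ref{thm4} for $p\equiv 3\pmod 8$, and apply the lower bound of Theorem \ref{thm2} for $p\equiv 3\pmod 4$. The only remark is that your floor-function bookkeeping yields an extra $+\tfrac{1}{2}$ (not a deficit), so nothing even needs to be absorbed.
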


\section{Preliminaries}

In this section, we shall state many useful results as follows.

\begin{theorem}\label{polya1} (Polya-Vinogradov) 
Let $p$ be any odd prime and $\chi$ be a non-principal Dirichlet character modulo $p$. Then, for any integers $0\leq M < N\leq p-1$, we have 
$$
\left|\sum_{m=M}^N \chi(m)\right| \leq \sqrt{p} \log p.$$
\end{theorem}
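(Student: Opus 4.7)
The plan is to prove the Polya--Vinogradov inequality by the classical Gauss-sum / finite Fourier method on $\mathbb{Z}/p\mathbb{Z}$. Since $p$ is prime and $\chi$ is non-principal, $\chi$ is automatically primitive modulo $p$; consequently, for every integer $n$ one has the additive Fourier expansion
$$
\chi(n) \;=\; \frac{1}{\tau(\bar\chi)} \sum_{a=1}^{p-1} \bar\chi(a)\, e^{2\pi i a n/p},
$$
where $\tau(\bar\chi) = \sum_{a=1}^{p-1} \bar\chi(a) e^{2\pi i a/p}$ denotes the Gauss sum. (For $p \mid n$ both sides vanish, since $\chi(n) = 0$ and $\sum_{a=1}^{p-1} \bar\chi(a) = 0$.) The single substantive arithmetic input is the Gauss-sum identity $|\tau(\bar\chi)| = \sqrt{p}$, which is obtained by a standard computation of $|\tau(\bar\chi)|^2$ via the orthogonality relations for additive characters of $\mathbb{Z}/p\mathbb{Z}$.

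Substituting the Fourier expansion into the target sum and interchanging the order of summation, I would write
$$
\sum_{n=M}^{N} \chi(n) \;=\; \frac{1}{\tau(\bar\chi)} \sum_{a=1}^{p-1} \bar\chi(a) \sum_{n=M}^{N} e^{2\pi i a n/p}.
$$
The inner geometric sum admits the elementary closed-form evaluation and the bound
$$
\left|\sum_{n=M}^{N} e^{2\pi i a n/p}\right| \;\leq\; \frac{1}{|\sin(\pi a/p)|} \quad \text{for } 1 \leq a \leq p-1.
$$
Combining this with $|\bar\chi(a)| \leq 1$ and $|\tau(\bar\chi)| = \sqrt{p}$, the triangle inequality produces
$$
\left|\sum_{n=M}^{N} \chi(n)\right| \;\leq\; \frac{1}{\sqrt{p}} \sum_{a=1}^{p-1} \frac{1}{|\sin(\pi a/p)|}.
$$

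It then remains to estimate the trigonometric sum on the right. Exploiting the symmetry $a \leftrightarrow p - a$ and the inequality $\sin(\pi t) \geq 2t$ valid for $t \in [0,1/2]$, one bounds the sum by $p \sum_{a=1}^{(p-1)/2} \frac{1}{a}$, which is at most $p\log p + O(p)$ by the standard harmonic-sum estimate. Dividing by $\sqrt{p}$ yields the asserted bound $\sqrt{p}\log p$ (up to lower-order terms which, for $p$ sufficiently large, can be absorbed into $\sqrt{p}\log p$, with the finitely many small primes handled by direct verification). The only genuine obstacle in the argument is the Gauss-sum identity $|\tau(\bar\chi)| = \sqrt{p}$; once it is in place, the remainder of the proof reduces to routine manipulations of finite geometric series and the harmonic sum.
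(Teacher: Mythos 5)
The paper offers no proof of this statement; it is quoted as the classical P\'olya--Vinogradov inequality, so there is nothing internal to compare against. Your Gauss-sum argument is the standard textbook proof and it is sound: since $p$ is prime, a non-principal $\chi$ is automatically primitive, the expansion $\chi(n)\tau(\bar\chi)=\sum_{a=1}^{p-1}\bar\chi(a)e^{2\pi i an/p}$ holds for all $n$ (including $p\mid n$, as you note), $|\tau(\bar\chi)|=\sqrt{p}$, and the geometric-sum bound reduces everything to $\frac{1}{\sqrt p}\sum_{a=1}^{p-1}|\sin(\pi a/p)|^{-1}\le \frac{1}{\sqrt p}\cdot p\sum_{a\le (p-1)/2}a^{-1}$. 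The one place you are slightly glib is the final constant: the crude estimate $\sum_{a\le n}a^{-1}\le 1+\log n$ gives $\log p+1-\log 2>\log p$, so ``absorbing lower-order terms'' does not work verbatim. To land on the stated constant $1$ you need the sharper $\sum_{a\le n}a^{-1}<\log n+\gamma+\frac{1}{2n}$ together with $\gamma<\log 2$, which yields $\sum_{a\le(p-1)/2}a^{-1}<\log p$ for every odd prime (the limiting slack is $\log 2-\gamma\approx 0.116$, and the small primes check directly). Since you explicitly flag the constant issue and its resolution, I regard this as a presentational loose end rather than a gap; the proof is correct.
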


Let us define the following counting functions as follows.  Let 
\begin{equation}\label{resi-count1}
f(x)=\displaystyle\frac{1}2\left(1+\left(\frac{x}{p}\right)\right)  \mbox{ for all  } x\in(\mathbb{Z}/p\mathbb{Z})^*
\end{equation}
and
\begin{equation}\label{resi-count2}
g(x)=
\displaystyle\frac12 \left(1-\left(\frac{x}{p}\right)\right)  \mbox{ for all } x\in (\mathbb{Z}/p\mathbb{Z})^*
\end{equation}
 where
$\displaystyle{\left(\frac{\cdot}{p}\right)}$ is the Legendre symbol. Then, we have
$$f(x)=\left\{\begin{array}{ll}
              1;&\mbox{if $x$ is a quadratic residue} \pmod p,\\
              0;&\mbox{otherwise}.
                   \end{array}\right.$$
and
$$g(x)=\left\{\begin{array}{ll}
              1;&\mbox{if $x$ is a quadratic nonresidue} \pmod p,\\
              0;&\mbox{otherwise}.
                  \end{array}\right.
$$

In the following lemma, we prove the ``expected'' result.

\bigskip
\begin{lemma}\label{expected}
For an integer $k\geq 1$ and an odd prime $p$, let $S_k = kI$ where $I$ is the interval $I =\{1, 2, \ldots, [(p-1)/k]\}.$  Then 
\begin{equation}\label{eqn 3}
Q(p, S_k)=\frac{1}{2}\left[\frac{p-1}{k}\right] +\frac12 \left(\frac{k}{p}\right)\sum_{m=1}^{(p-1)/k}\left(\frac{m}{p}\right)
\end{equation}
and hence 
$$
Q(p, S_k) = \frac{1}{2}\left[\frac{p-1}{k}\right] +O(\sqrt{p}\log p).
$$
The same expressions hold for $N(p, S_k)$ as well.
\end{lemma}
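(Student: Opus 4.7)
The plan is to express $Q(p,S_k)$ as a sum of the indicator $f$ defined in \eqref{resi-count1} over the elements of $S_k$, exploit the multiplicativity of the Legendre symbol, and then invoke Polya-Vinogradov (Theorem~\ref{polya1}) on the residual character sum.

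First I would write
$$Q(p,S_k)=\sum_{a\in S_k} f(a)=\sum_{m=1}^{[(p-1)/k]} f(km)=\sum_{m=1}^{[(p-1)/k]} \frac12\left(1+\left(\frac{km}{p}\right)\right),$$
using that $S_k=\{k,2k,\dots,[(p-1)/k]\,k\}$ and that every $km$ here lies in $(\mathbb{Z}/p\mathbb{Z})^*$ since $k\cdot[(p-1)/k]\le p-1<p$. Splitting the sum into its two pieces and pulling out the Legendre symbol $(k/p)$, which is well-defined since $\gcd(k,p)=1$ (assuming $p\nmid k$; the case $p\mid k$ is vacuous because then $S_k=\emptyset$), yields
$$Q(p,S_k)=\frac{1}{2}\left[\frac{p-1}{k}\right]+\frac{1}{2}\left(\frac{k}{p}\right)\sum_{m=1}^{[(p-1)/k]}\left(\frac{m}{p}\right),$$
which is the asserted identity \eqref{eqn 3}.

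For the error term, I would apply Theorem~\ref{polya1} (Polya-Vinogradov) with the non-principal character $\chi=(\,\cdot\,/p)$, $M=1$, $N=[(p-1)/k]\le p-1$, to obtain
$$\left|\sum_{m=1}^{[(p-1)/k]}\left(\frac{m}{p}\right)\right|\le \sqrt{p}\,\log p.$$
Since $|(k/p)|\le 1$, this bounds the second term on the right of \eqref{eqn 3} by $\tfrac12\sqrt{p}\log p$, giving the stated $O(\sqrt{p}\log p)$ estimate. The argument for $N(p,S_k)$ is identical using $g$ in place of $f$; the only change is a sign in front of the character sum, which does not affect the absolute-value bound. There is no real obstacle here: the only minor points are verifying that $km\not\equiv 0\pmod{p}$ throughout the range (immediate from $km\le p-1$) and separating the trivial case $p\mid k$, which makes $S_k$ empty.
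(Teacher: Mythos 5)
Your proposal is correct and follows essentially the same route as the paper: write $Q(p,S_k)$ as a sum of the indicator $f$ over $S_k$, reparametrize the elements as $km$, use multiplicativity of the Legendre symbol to extract the factor $\left(\frac{k}{p}\right)$, and bound the remaining character sum by Polya--Vinogradov. Your explicit handling of the reparametrization and of the (vacuous) case $p\mid k$ only makes precise steps the paper leaves implicit.
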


\begin{proof}
We prove for $Q(p, S_k)$ and the proof of $N(p, S_k)$ follows analogously. Let $\psi_k$ be the characteristic function for $S_k$ which is defined as 
$$ 
\psi_k(m) = \left\{\begin{array}{ll}
1; &\mbox{ if } m\in S_k,\\
0; &\mbox{ if } m\not\in S_k.
\end{array}\right.
$$
Now, by \eqref{resi-count1}, we see that 
\begin{eqnarray}\label{countingfunction}
Q(p,S_k) &=& \sum_{m\in S_k}f(m) = \sum_{m=1}^{p-1} \psi_k(m) f(m) = \frac{1}{2}\sum_{m=1}^{p-1} \psi_k(m) \left(1+\left(\frac{m}{p}\right)\right) \nonumber\\
&=& \frac{1}{2}\left[\frac{p-1}{k}\right] +\frac12 \left(\frac{k}{p}\right)\sum_{m=1}^{(p-1)/k}\left(\frac{m}{p}\right),
\end{eqnarray}
which proves \eqref{eqn 3}. Then, by Theorem \ref{polya1}, we get 
$$
Q(p, S_k) = \frac{1}{2}\left[\frac{p-1}{k}\right] + O(\sqrt{p}\log p).
$$
This finishes the proof. 
\end{proof}

Let $q >1$ be a positive integer and let $\psi$ be a non-trivial quadratic character modulo $q$. Let $L(s, \psi) = \displaystyle\sum_{n=1}^{\infty}\frac{\psi (n)}{n^s}$ be the Dirichlet L-function associated to $\psi$.  Since $\psi$  is a non-trivial homomorphism,  $L(s, \psi)$ admits the following Euler product expansion 
$$
L(s, \psi) = \prod_{p\nmid q}\left(1-\frac{\psi(p)}{p^s}\right)
$$
for all complex number $s$ with $\Re(s) > 1$. This, in particular, shows that $L(s, \psi) >0$ for all real number $s >1$. By continuity, it follows that  $L(1, \psi) \geq 0$. Dirichlet proved that $L(1, \psi) \ne 0$ in order to prove the infinitude of prime numbers in an arithmetic progression. Hence, it follows that $L(1, \psi) > 0$ for all non-trivial quadratic character $\psi$. Since $L(1, \psi) > 0$, it is natural to expect some non-trivial lower bound as a function of  $q$.  
This is what was proved by Landau-Siegel in the following theorem. The proof  can be found in \cite{montu}.

\begin{theorem} \label{estimate-lower bound}
Let $q >1$ be a positive integer and $\psi$ be a non-trivial quadratic character modulo $q$. Then for each $\epsilon > 0$, there exists a constant $C(\epsilon) > 0$ such that $$L(1, \psi) > \frac{C(\epsilon)}{q^{\epsilon}}.$$
\end{theorem}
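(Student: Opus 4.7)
The plan is to establish this as Siegel's classical ineffective lower bound, via the two-character positivity trick. First I would reduce to the case that $\psi$ is primitive modulo its conductor $q^*$: indeed, $L(1,\psi)$ for an imprimitive character differs from the primitive one by an explicit finite Euler product that is bounded above and below in terms of $q$, so a bound for primitive characters implies one for the general case.

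The main ingredient is the auxiliary function
\[
F(s) \;=\; \zeta(s)\,L(s,\psi_1)\,L(s,\psi_2)\,L(s,\psi_1\psi_2),
\]
where $\psi_1,\psi_2$ are two distinct primitive real characters of conductors $q_1,q_2$. The key property is that $F(s)$ has non-negative Dirichlet coefficients (up to finitely many Euler factors, $F$ is the Dedekind zeta function of the biquadratic field cut out by $\psi_1$ and $\psi_2$), and $F(s)$ has a simple pole at $s=1$ with residue $L(1,\psi_1)L(1,\psi_2)L(1,\psi_1\psi_2)$. Both facts are checked locally by comparing Euler factors prime by prime according to the splitting behaviour dictated by the values of $\psi_1(p),\psi_2(p)$.

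Fix $\epsilon > 0$. I would then argue by dichotomy. Either (a) every primitive real character $\psi$ of conductor at most some threshold $q_0(\epsilon)$ satisfies $L(1,\psi) \gg_\epsilon q^{-\epsilon}$, in which case the bound holds trivially for these; or (b) there exists a primitive real character $\psi_1$ of conductor $q_1$ for which $L(s,\psi_1)$ has a real zero $\beta$ very close to $1$ (a Siegel zero). In case (b), I would fix this $\psi_1$ once and for all and, given any other primitive real $\psi_2$ of conductor $q_2 > q_1$, consider the Taylor/Dirichlet expansion of $F$ at a carefully chosen real point $\sigma = 1 - \delta(\epsilon) \in (\beta,1)$. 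The non-negativity of the Dirichlet coefficients of $F$ combined with the vanishing $L(\beta,\psi_1) = 0$ and standard bounds $|L(\sigma,\psi_1\psi_2)| \ll (q_1 q_2)^{1-\sigma}\log(q_1q_2)$ forces the residue $L(1,\psi_1)L(1,\psi_2)L(1,\psi_1\psi_2)$ to be bounded below; solving for $L(1,\psi_2)$ yields the desired inequality $L(1,\psi_2) \gg_\epsilon q_2^{-\epsilon}$.

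The main obstacle is precisely the ineffectivity: the constant $C(\epsilon)$ depends on the conductor $q_1$ of a hypothetical exceptional character whose very existence is unknown, so no explicit form for $C(\epsilon)$ can be extracted from this argument. A secondary technical point is the calibration of $\sigma$: it must be close enough to $\beta$ to exploit the Siegel zero through the mean-value inequality $F(\sigma) \geq 1$ coming from positivity, yet far enough from $1$ that the pole of $\zeta(s)$ does not overwhelm the estimate. The standard choice $\sigma = 1 - c\epsilon$ for a small absolute constant $c$ balances these competing requirements, and after collecting the bounds the claimed inequality $L(1,\psi) > C(\epsilon) q^{-\epsilon}$ follows uniformly in $\psi$.
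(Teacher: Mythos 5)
The paper does not actually prove this statement; it is Siegel's theorem, quoted with a reference to Montgomery--Vaughan, so there is no internal proof to compare against. Your sketch reconstructs the standard (and essentially only known) argument: the auxiliary product $F(s)=\zeta(s)L(s,\psi_1)L(s,\psi_2)L(s,\psi_1\psi_2)$ with non-negative Dirichlet coefficients, reduction to primitive characters, the dichotomy on the existence of an exceptional real zero, and the intrinsic ineffectivity of $C(\epsilon)$. The architecture is right, and your remarks about the biquadratic Dedekind zeta interpretation, the bound $L(1,\psi_1\psi_2)\ll\log(q_1q_2)$, and the source of ineffectivity are all correct.

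Two steps are garbled as written, however. First, the inequality ``$F(\sigma)\geq 1$ coming from positivity'' is false for $\sigma<1$: the Dirichlet series of $F$ diverges there because of the pole at $s=1$, so positivity of coefficients gives nothing directly. The actual workhorse is a lemma of the form $F(\sigma)>\tfrac12-c\,\lambda\,(1-\sigma)^{-1}(q_1q_2)^{8(1-\sigma)}$ for $\tfrac78\leq\sigma<1$, where $\lambda=L(1,\psi_1)L(1,\psi_2)L(1,\psi_1\psi_2)$, proved by expanding $F(s)-\lambda/(s-1)$ in a power series about $s=2$ with non-negative coefficients and truncating; and one evaluates at $\sigma=\beta$ itself, where $F(\beta)=0$, not at a point of $(\beta,1)$, where no sign information is available. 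Second, your dichotomy is not exhaustive: case (a) only disposes of the finitely many characters of conductor at most $q_0(\epsilon)$, and case (b) assumes an exceptional zero exists. You must also handle characters of large conductor when \emph{no} real primitive character has a real zero in $(1-\delta(\epsilon),1)$. The standard fix is to set $\sigma=1-\delta(\epsilon)$ and observe that $\zeta(\sigma)<0$ on $(0,1)$ while the three $L$-factors, being nonzero on $[\sigma,1]$ and positive for $s>1$, stay positive there; hence $F(\sigma)\leq 0$ in this case too, and the same lemma yields the lower bound on $\lambda$. With these two repairs the sketch becomes the correct proof.
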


The following lemma is crucial for our discussions. This lemma connects the sum of Legendre symbols and the Dirichlet L-function associated to Legendre symbol via the famous Dirichlet class number formula for the quadratic field.  For an odd prime $p$, the Legendre symbol $\displaystyle\left(\frac{\cdot}p\right) = \chi_p(\cdot)$ is a quadratic Dirichlet  character modulo $p$. We also define a  character 
$$
\chi_4(n) =\left\{\begin{array}{ll}
(-1)^{(n-1)/2}; & \mbox{ if $n$ is odd},\\
0; &\mbox{ otherwise}.
\end{array}\right.
$$
Then one can define the Dirichlet character $\chi_{4p}$ as $\chi_{4p}(n) = \chi_4(n)\chi_p(n)$ for any odd prime $p$ and similarly, we can define $\chi_{3p}(n) = \chi_3(n) \chi_p(n)$ for any odd prime $p>3$. Clearly, $\chi_{4p}$ and $\chi_{3p}$ are non-trivial and real quadratic Dirichlet characters. 

\bigskip
 
\begin{lemma} \label{wright1} (See for instance, Page 151, Theorem 7.2 and 7.4 in \cite{wright}) 
Let $p>3$ be an odd prime and   for any real number $\ell\geq 1$, we define
\begin{equation}\label{eq2}
S(1, \ell) = \displaystyle\sum_{1\leq m< \ell}\chi_p(m).
\end{equation}
 Then we have the following equalities.
\begin{enumerate}
\item For a prime $p\equiv 3\pmod{4}$, we have  
$$
S(1,p/2) = \frac{\sqrt{p}}{\pi} \left(2 - \chi_p(2)\right) L(1, \chi_p),$$
where $L(1, \chi_p)$ is the Dirichlet $L$-function; Also, we have 
$$
S(1, p/3) = \frac{\sqrt{p}}{2\pi} (3-\chi_p(3))L(1, \chi_p).
$$
\item For a prime  $p\equiv 1\pmod{4}$, we have 
$$
S(1,p/3) = \frac{\sqrt{3p}}{2\pi} L(1, \chi_{3p});$$
Also, we have 
$$
S(1, p/4) = \frac{\sqrt{p}}{\pi} L(1, \chi_{4p}).$$
\end{enumerate}
\end{lemma}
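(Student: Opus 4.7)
The proof combines Dirichlet's analytic and arithmetic (finite) class number formulas for imaginary quadratic fields. On the analytic side, for a fundamental discriminant $-D < -4$ one has
$$h(-D) = \frac{\sqrt{D}}{\pi}L(1,\chi_{-D}),$$
where $\chi_{-D}$ is the Kronecker symbol. On the arithmetic side, whenever $\chi_{-D}$ is odd, Dirichlet's finite formula gives
$$h(-D) = -\frac{1}{D}\sum_{a=1}^{D-1} a\,\chi_{-D}(a).$$
The plan is to reduce this finite sum to a character sum over a short interval of the form $[1,p/k]$ and then equate the two expressions for $h(-D)$.

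For $p \equiv 3 \pmod 4$ I would take the field $\mathbb{Q}(\sqrt{-p})$, whose discriminant is $-p$ and whose associated character is $\chi_p$ with $\chi_p(-1) = -1$. Splitting $\sum_{a=1}^{p-1} a\,\chi_p(a)$ into the halves $[1,(p-1)/2]$ and $[(p+1)/2,p-1]$ and applying $a \mapsto p-a$ in the upper half collapses the sum to $\sum_{a=1}^{(p-1)/2}(2a-p)\chi_p(a)$. A further partition of the lower half by parity, writing each even term as $2m$ and pulling out $\chi_p(2)$ by multiplicativity, produces the identity $h(-p) = \frac{1}{2-\chi_p(2)}S(1,p/2)$; combining with the analytic formula yields the first claim of (1). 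The $S(1,p/3)$ identity in (1) follows by the same template, partitioning the lower interval by residue classes modulo $3$ and extracting $\chi_p(3)$ in place of $\chi_p(2)$.

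For $p \equiv 1 \pmod 4$ the character $\chi_p$ is even, so the above argument cannot be applied to $\chi_p$ directly. Instead, the relevant fields are $\mathbb{Q}(\sqrt{-3p})$ with discriminant $-3p$ and character $\chi_{3p} = \chi_3\chi_p$, and $\mathbb{Q}(\sqrt{-p})$ with discriminant $-4p$ and character $\chi_{4p} = \chi_4\chi_p$; both carry odd real characters as required. Applying the finite class number formula to $\chi_{3p}$ and partitioning $[1, 3p-1]$ by residues modulo $3$ absorbs the $\chi_3$ factor and expresses the resulting sum in terms of $S(1,p/3)$. The parallel argument for $\chi_{4p}$, partitioning modulo $4$, yields the $S(1,p/4)$ formula. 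Matching these arithmetic identities with the corresponding analytic formulas then produces the two statements of part (2).

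The main obstacle is essentially bookkeeping: carefully pairing terms in the character sums by residue class modulo $k$, tracking the sign $\chi(-1) = -1$ that emerges under the reflection $a \mapsto D - a$, and handling the endpoints of the intervals $(p/k,(k-1)p/k)$ correctly to avoid off-by-one issues (especially delicate when $k = 3$ or $k = 4$ and $p/k$ is not an integer). Once these identifications are performed, equating the two closed-form expressions for $h(-D)$ yields the four identities, which is precisely the content of Theorems 7.2 and 7.4 of Wright \cite{wright} to which the authors refer.
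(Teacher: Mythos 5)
The paper itself does not prove this lemma; it is quoted from Wright's survey (Theorems 7.2 and 7.4 there), so there is no internal proof to compare against. Your outline is the standard derivation of exactly those results: equate the analytic class number formula $h(-D)=\frac{\sqrt{D}}{\pi}L(1,\chi_{-D})$ (valid for fundamental discriminants $-D<-4$, where $w=2$) with Dirichlet's finite formula $h(-D)=-\frac{1}{D}\sum_{a=1}^{D-1}a\,\chi_{-D}(a)$, and reduce the finite sum to a character sum over a short interval. Your choices of discriminant are the correct ones: $-p$ when $p\equiv 3\pmod{4}$, and $-4p$, $-3p$ when $p\equiv 1\pmod{4}$, where $\chi_p$ itself is even and unusable. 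The resulting identities $S(1,p/2)=(2-\chi_p(2))h(-p)$, $S(1,p/3)=\tfrac12(3-\chi_p(3))h(-p)$, $S(1,p/4)=\tfrac12 h(-4p)$ and $S(1,p/3)=\tfrac12 h(-3p)$ all match the stated formulas and check numerically (e.g.\ $p=7,11,13$). The only caveat is that the step you describe as ``bookkeeping'' is where the actual work lives: in part (1) one needs \emph{two} linear relations between $T=\sum_{a=1}^{p-1}a\chi_p(a)$ and $S(1,p/2)$ --- one from the reflection $a\mapsto p-a$, one from writing the odd $a$ as $p-2m$ --- to solve for $T$ in terms of $S$; and in part (2) the reduction of $\sum_{a=1}^{kp-1}a\chi_{kp}(a)$ to $S(1,p/k)$ for $k=3,4$ is the substantive content of Wright's Theorem 7.4 and is only asserted. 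These steps are routine and classical, so the proposal is sound as a proof sketch of the cited result.
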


Now, we need the following lemma, which deals with the vanishing sums of  Legendre symbols. 
 This was proved in \cite{bruce}. For more such relations one may refer to \cite{joh1}.

\begin{lemma}\cite{bruce}\label{bruce}
Let $p$ be an odd prime. Then the following equalities hold true.
\begin{enumerate}
\item If $p\equiv 1\pmod{4}$, then we have $\displaystyle \sum _{n=1} ^ {(p-1)/2} \left( \frac{n}{p} \right) =0$.
\item If $p \equiv 3\pmod{8}$, then we have $ \displaystyle \sum _{n=1} ^{ \lfloor p/4 \rfloor} \left( \frac{n}{p} \right)= 0$.
\item If $ p \equiv 7 \pmod{8}$, then we have $ \displaystyle \sum _{\lceil p/4\rceil}^{\lfloor p/2\rfloor} \left( \frac{n}{p}\right) =0$.
\end{enumerate}
\end{lemma}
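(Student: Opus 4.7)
The plan is to derive all three equalities from two elementary substitutions acting on the identity $\sum_{n=1}^{p-1}\chi_p(n)=0$: the reflection $n\mapsto p-n$, which multiplies $\chi_p$ by $\chi_p(-1)$, and the dilation $n\mapsto 2n$, which multiplies it by $\chi_p(2)$. The three congruence conditions in the statement are exactly those in which the resulting values of $\chi_p(-1)$ and $\chi_p(2)$ force one of the partial sums to collapse.

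For part~(1), I would write $0=\sum_{n=1}^{p-1}\chi_p(n)=A+B$ with $A=\sum_{n=1}^{(p-1)/2}\chi_p(n)$ and $B=\sum_{n=(p+1)/2}^{p-1}\chi_p(n)$. The change of variable $n\mapsto p-n$ in $B$ gives $B=\chi_p(-1)A$, and when $p\equiv 1\pmod 4$ one has $\chi_p(-1)=1$, so $2A=0$, which is the claim.

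For parts~(2) and~(3), set $S_1=\sum_{n=1}^{\lfloor p/4\rfloor}\chi_p(n)$ and decompose $A=E+O$, where $E$ and $O$ are the subsums over the even and odd integers in $[1,(p-1)/2]$, respectively. The even integers in that range are precisely $\{2n:1\le n\le\lfloor p/4\rfloor\}$, so $E=\chi_p(2)\,S_1$. I would then compute $\chi_p(2)A=\sum_{n=1}^{(p-1)/2}\chi_p(2n)$ in a second way: the contribution of $n\le\lfloor p/4\rfloor$ is exactly $E$ (since then $2n\in[1,(p-1)/2]$), while for $\lfloor p/4\rfloor<n\le(p-1)/2$ the substitution $m=p-2n$ bijects the range of summation with the odd integers in $[1,(p-1)/2]$, contributing $\chi_p(-1)\,O$. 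This yields the key identity
$$\chi_p(2)\,A \;=\; E + \chi_p(-1)\,O.$$

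What remains is a case check. For $p\equiv 3\pmod 8$, both $\chi_p(2)$ and $\chi_p(-1)$ equal $-1$; combining the identity above with $A=E+O$ forces $E=0$, and then $S_1=0$ since $E=\chi_p(2)S_1$. For $p\equiv 7\pmod 8$ we have $\chi_p(2)=1$, $\chi_p(-1)=-1$; the same two relations now force $O=0$, and
$$\sum_{n=\lceil p/4\rceil}^{\lfloor p/2\rfloor}\chi_p(n) \;=\; A-S_1 \;=\; \bigl(\chi_p(2)-1\bigr)S_1 + O \;=\; O \;=\; 0.$$
The only delicate point, and the place where care is needed, is verifying that $m=p-2n$ really does sweep out the odd integers in $[1,(p-1)/2]$ exactly once in both residue classes $p\equiv 3,7\pmod 8$; this is a short parity and endpoint calculation, but is where a sloppy analysis would miss a boundary term.
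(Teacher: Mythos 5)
Your proof is correct. Note that the paper itself gives no argument for this lemma; it simply cites Berndt and Chowla, so there is no ``paper proof'' to compare against line by line. Your derivation is the standard elementary one: part (1) from the reflection $n\mapsto p-n$ together with $\chi_p(-1)=1$, and parts (2) and (3) from playing the dilation identity $\chi_p(2)A=E+\chi_p(-1)O$ against the trivial decomposition $A=E+O$. I checked the one point you flagged: for $p\equiv 3\pmod 4$ the map $n\mapsto p-2n$ on $\lfloor p/4\rfloor<n\le (p-1)/2$ does land bijectively on the odd integers of $[1,(p-1)/2]$ (both sets have $(p+1)/4$ elements, and $p-2n$ runs from $(p-1)/2$ down to $1$ in steps of $2$), and the endpoint identities $\lceil p/4\rceil=\lfloor p/4\rfloor+1$, $\lfloor p/2\rfloor=(p-1)/2$, $\lfloor(p-1)/4\rfloor=\lfloor p/4\rfloor$ all hold for odd $p$, so $E=\chi_p(2)S_1$ and the final telescoping $A-S_1=(\chi_p(2)-1)S_1+O$ are justified. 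One stylistic remark: since Lemma \ref{wright1} already expresses $S(1,p/2)$ and $S(1,p/4)$ in terms of $L(1,\chi_p)$ and $L(1,\chi_{4p})$, parts (1) and (2) could alternatively be read off from the vanishing or non-vanishing of the relevant coefficients there; your argument has the advantage of being entirely elementary and independent of the class number formula, which is in the spirit of the original Berndt--Chowla reference.
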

\medskip

\section{Proof of Theorem \ref{thm2}}

Let $p$ be a given odd prime. We want to estimate the quantity $Q(p, S_2)$. Therefore, by \eqref{countingfunction}, we get 
\begin{equation}\label{1eq1}
Q(p, S_2)  = \frac{1}{2}\left[\frac{p-1}{2}\right] +\frac{1}{2} \left(\frac{2}{p}\right)\sum_{n=1}^{(p-1)/2}\left(\frac{n}{p}\right).
\end{equation}
Now, we consider three cases as follows.

\bigskip

\noindent{\bf Case 1.} $p\equiv 1 \pmod{4}$

\smallskip

In this case,  since  
$\displaystyle \sum_{n=1}^{(p-1)/2} \left(\frac{n}{p}\right) = 0$, by Lemma \ref{bruce}  (1),  the equation \eqref{1eq1}  reduces to 
 $$
  Q(p, S_2) = \frac{p-1}{4},$$
 which is as desired.

\bigskip

\noindent{\bf Case 2.} $p\equiv 3 \pmod{8}$

\smallskip

By Lemma \ref{wright1} (1) and by \eqref{1eq1}, we get
$$
Q(p, S_2) = \frac 12\left[\frac{p-1}{2}\right]  +\frac{\sqrt{p}}{\pi} \left(2 - \chi_p(2)\right) L(1, \chi_p).
$$
In this case, we know that $ \left(\frac{2}{p}\right) = -1$. Therefore, we get 
$$
Q(p, S_2) = \frac 12\left[\frac{p-1}{2}\right]  + 3\frac{\sqrt{p}}{\pi} L(1, \chi_p).
$$
Let $\epsilon$ be any real number such that $0< \epsilon < \frac{1}{2}$. Then by  Theorem \ref{estimate-lower bound}, we get 
$$
Q(p, S_2) - \frac 12\left[\frac{p-1}{2}\right]  \gg_\epsilon p^{\frac{1}{2}-\epsilon},
$$
as desired.

\bigskip

\noindent{\bf Case 3.} $p\equiv 7\pmod{8}$.

\smallskip

Since $p\equiv 7\pmod{8}$, we know that $\displaystyle\left(\frac{2}{p}\right) = 1$. Therefore, by Lemma \ref{wright1} (1) and by \eqref{1eq1}, we get
$$
Q(p, S_2) = \frac{1}{2}\left[\frac{p-1}{2}\right] +  \frac{\sqrt{p}}{\pi} L(1, \chi_p) = \frac{1}{2}\left[\frac{p-1}{2}\right] + \frac{\sqrt{p} L(1,\chi_p)}{\pi}.
$$
Let  $\epsilon$ be any real number such that $0<\epsilon < \frac{1}{2}$. Then by  Theorem \ref{estimate-lower bound} we get
$$
Q(p, S_2) - \frac{1}{2}\left[\frac{p-1}{2}\right] \gg_\epsilon p^{\frac{1}{2}-\epsilon}
$$
which proves the theorem. $\hfill\Box$

\section{Proof of Theorem \ref{thm3}}

Let $p$ be a given odd prime. We want to estimate the quantity $Q(p, S_3)$. Therefore, by \eqref{countingfunction}, we get, 
\begin{equation}\label{2eq1}
Q(p, S_3)  = \frac{1}{2}\left[\frac{p-1}{3}\right] + \left(\frac{3}{p}\right)\sum_{n=1}^{(p-1)/3}\left(\frac{n}{p}\right).
\end{equation}
Now, we consider the following cases.

\bigskip

\noindent {\bf Case 1. }  $ p \equiv 1 \pmod{12}$

\smallskip

Note that, in this case, we have $\displaystyle\left( \frac{3}{p} \right) =1$. By \eqref{2eq1} and by Lemma \ref{wright1} (2), we get
\begin{eqnarray*}
Q(p, S_3) - \frac{1}{2}\left(\frac{p-1}{3}\right) &=& \frac12 \frac{\sqrt{3p}}{2\pi} L(1, \chi_{3}\chi_p)\\
     & \geq & \frac{\sqrt{3p}}{4\pi}\frac{C(\epsilon)}{(3p)^{\epsilon}}\\
     &\gg_\epsilon& p^{\frac{1}{2}-\epsilon},
\end{eqnarray*} 
for any given $0< \epsilon < \frac{1}{2}$ in Theorem \ref{estimate-lower bound}.

\bigskip

\noindent {\bf Case 2.}  $ p \equiv 11 \pmod{12}$

\smallskip

In this case, we have,  $\displaystyle\left( \frac{3}{p} \right) =1$.
Then again by \eqref{2eq1} and by Lemma \ref{wright1} (1), we get
\begin{eqnarray*}
Q(p, S_3)& = &\frac{1}{2}\left[\frac{p-1}{3}\right] +\frac12 \frac{\sqrt{3p}}{2\pi} (3- \chi_p(3))L(1,\chi_p).
\end{eqnarray*} 
Hence 
$$
Q(p, S_3) - \frac{1}{2}\left[ \frac{p-1}{3}\right] \gg_\epsilon p^{\frac{1}{2}-\epsilon},
$$
for any  $0<\epsilon < \frac{1}{2}$ in Theorem \ref{estimate-lower bound}. $\hfill\Box$

\bigskip

\section{Proof of theorem \ref{thm4}}
At first, using the equation \eqref{countingfunction}, we note that 
\begin{equation}\label{eqn5}
 Q(p,S_4)=\frac{1}{2}\left[\frac{p-1}{4}\right] +\frac12 \left(\frac{4}{p}\right)\sum_{m=1}^{(p-1)/4}\left(\frac{m}{p}\right) = \frac{1}{2} \left[ \frac{p-1}{4} \right] + \frac 12 \sum_{m=1}^{(p-1)/4}\left(\frac{m}{p}\right).
\end{equation}

\bigskip

\noindent {\bf Case 1. }  $ p \equiv 1 \pmod{4}$

\smallskip

Now, we apply Lemma \ref{wright1} (2) in \eqref{eqn5} and we get
\begin{eqnarray*}
Q(p, S_4)& = &\frac{1}{2}\left(\frac{p-1}{4}\right) + \frac12 \frac{\sqrt{p}}{\pi}L(1, \chi_{4} \chi_{p}).
\end{eqnarray*} 
Hence 
$$
Q(p, S_4) - \frac{p-1}{8} \gg_\epsilon p^{\frac{1}{2}-\epsilon},
$$
for any  $0<\epsilon < \frac{1}{2}$ in Theorem \ref{estimate-lower bound}.

\bigskip

\noindent {\bf Case 2. }  $ p \equiv 3 \pmod{8}$

\smallskip

In this case, we apply Lemma \ref{bruce} (2) which says that $\displaystyle \sum_{n=1}^{[(p-1)/{4}]} \left( \frac{m}{p} \right) = 0 $. Hence, by \eqref{eqn5}, we get 
$$  
Q(p,S_4) =\frac{1}{2} \left[ \frac{p-1}{4} \right].
$$

\noindent {\bf Case 3. }  $ p \equiv 7 \pmod{8}$

\smallskip

First note that by Lemma \ref{bruce} (3), we have 
$$
\displaystyle \sum _{ \frac{p-1}{4} < m < \frac{p-1}{2}} \left(\frac{m}{p} \right) =0.
$$
Therefore, the equation \eqref{eqn5} can be rewritten as 
\begin{eqnarray*}
Q(p,S_4) &= &\frac{1}{2} \left[ \frac{p-1}{4} \right] + \frac 12 \sum_{1\leq  m \leq {(p-1)/4}} \left(\frac{m}{p}\right) + \frac 12 \sum _{(p-1)/{4} \leq m \leq  {(p-1)}/{2}}\left(\frac{m}{p} \right)\\
      & =& \frac{1}{2} \left[ \frac{p-1}{4} \right] + \frac{1}{2} \sum _{m=1} ^{ \frac{p-1}{2}} \left( \frac{m}{p} \right).
\end{eqnarray*} 
Now, by Lemma \ref{wright1} (1), we get
\begin{eqnarray*}
Q(p, S_4) &= &\frac 12 \left[ \frac{p-1}{4}\right] +\frac 12 \frac{\sqrt{p}}{\pi}L(1, \chi_p).
\end{eqnarray*} 
Hence 
$$
Q(p, S_4) - \frac{1}{2} \left[ \frac{p-1}{4}\right] \gg p^{\frac{1}{2} - \epsilon},
$$
for any   $0<\epsilon < \frac{1}{2}$ in Theorem \ref{estimate-lower bound}. This proves the result. $\hfill\Box$

\bigskip

\noindent{\bf Acknowledgement.} We thank Professor V. Kumar Murty for going through the manuscript  very carefully and for a suggestion to clear our doubts.

\end{document}